\newtheorem{theorem}{Theorem}
\newtheorem{lemma}{Lemma}
\newcommand\tig{\tilde g}
\title{On zeroes and poles of Helson zeta functions}
\author{I. Bochkov and R. Romanov\thanks{morovom@gmail.com}\\
\\
Department of Mathematics and Computer Science,\\
St Petersburg State University, Russia}
\date{}
\begin{document}

\maketitle
\begin{abstract} We show that the analytic continuations of Helson zeta functions $  \zeta_\chi (s)= \sum_1^{\infty}\chi(n)n^{-s} $ can have essentially arbitrary poles and zeroes  in the strip $ 21/40 < \Re s < 1 $ (unconditionally), and in the whole critical strip $ 1/2 < \Re s <1 $ under Riemann Hypothesis. 
\end{abstract}

Let $ \chi \colon \mathbb N \to \mathbb T $, $ \mathbb T = \{ z \in \mathbb C \colon |z|=1 \}$ be a completely multiplicative function, and let 
\begin{equation}\label{H} \zeta_\chi (s)= \sum_1^{\infty}\chi(n)n^{-s} . \end{equation}  

This series defines an analytic function in the halfplane $ \Re s > 1 $. The Euler product formula, \[ \zeta_\chi (s)=\prod_p \frac{1}{1-\chi(p)p^{-s}} , \] 
holds. In particular, the function $ \zeta_\chi $ has no zeroes for $ \Re s > 1 $. 

The study of functions $ \zeta_\chi $ was initiated by H. Helson who showed \cite{Hels} in 1969 that for almost all $ \chi$'s the function $ \zeta_\chi $ extends analytically to the halfplane $ \Re s > 1/2 $ and has no zeroes in this halfplane. Almost all here refers to the measure on the functions $ \chi $ induced by the standard product measure on the infinite dimensional torus $ \mathbb T^\infty $ via identification of $ \chi $ with the sequence $ \{ \chi (p) \} \in \mathbb T^\infty $ of its values at primes. Recently it has been shown \cite{saks} that Helson's result is optimal in that  the function $ \zeta_\chi $ does not admit meromorphic extension to the halfplane $ \Re s > \alpha $ for any $ \alpha < 1/2 $ almost surely in the above sense.

These results bring about the question of the structure of analytic continuation for $ \chi $ from the exceptional set in the Helson theorem. Such a study has been undertaken in \cite{S} where it was shown that the set of zeroes of meromorophic continuation of $ \zeta_\chi $ in the strip $ 1/2 < \Re s<1$ can be essentially arbitrary. Namely, the following theorem holds. 

\begin{theorem}{\cite[Theorem 1.4]{S}}
For any set $ \mathfrak O $ in the strip $ 1/2  < \Re s<  39/40 $ which has no accumulation points off the line $ \Re s = 1/2 $ there exists a completely multiplicative function $ \chi $ such that the Helson zeta function $ \zeta_\chi $ admits meromophic extension to the halfplane $ \Re s > 1/2 $, and 
\[ \left\{ s : \zeta_\chi ( s ) = 0,   \Re s > \frac 12 \right\} = \mathfrak O . \] 

If the RH is assumed then the same assertion holds with $ 1$ in the place of $ 39/40 $.
\end{theorem}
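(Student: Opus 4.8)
The plan is to pass to logarithms and reduce the problem to prescribing the singularities of the prime Dirichlet series $P_\chi(s)=\sum_p\chi(p)p^{-s}$. Since $|\chi(p)|=1$, the higher-prime-power part $R_\chi(s)=\sum_{k\ge 2}k^{-1}\sum_p\chi(p)^kp^{-ks}$ converges absolutely and is analytic for $\Re s>1/2$. Hence $\zeta_\chi=\exp(P_\chi+R_\chi)$ on $\Re s>1$, and $\zeta_\chi$ continues to a meromorphic (in fact analytic and non-vanishing) function exactly where $P_\chi$ does, a zero of order $m$ at $s_0$ corresponding to a logarithmic singularity $P_\chi(s)=m\log(s-s_0)+(\text{analytic})$ with $m$ a \emph{positive integer}. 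It therefore suffices to choose the unimodular values $\chi(p)$ so that $P_\chi$ extends analytically to $\{\Re s>1/2\}$ apart from such logarithmic singularities at the points of $\mathfrak O$, and nowhere else.

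For a single target $s_0=\beta_0+i\gamma_0$ with $1/2<\beta_0<39/40$ I would first isolate the local model. Choosing a subset $A$ of the primes with counting function $\pi_A(x)=\#\{p\in A:p\le x\}\sim m\,\mathrm{Li}(x^{\beta_0})$ makes $\sum_{p\in A}p^{-w}$ have abscissa of convergence $\beta_0$ with a logarithmic singularity of coefficient $m$ there: writing $\pi_A=m\,\mathrm{Li}(x^{\beta_0})+E$ and $\sum_{p\in A}p^{-w}=w\int_2^\infty\pi_A(x)x^{-w-1}\,dx$, the smooth main term $w\int_2^\infty m\,\mathrm{Li}(x^{\beta_0})x^{-w-1}\,dx$ is explicit, with its only singularity in $\Re w>0$ a logarithmic branch point at $w=\beta_0$, while the error term is analytic wherever $\int|E(x)|x^{-\Re w-1}\,dx$ converges. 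Setting $\chi(p)=-p^{i\gamma_0}$ on $A$ then yields $\sum_{p\in A}\chi(p)p^{-s}=-\sum_{p\in A}p^{-(s-i\gamma_0)}\sim m\log(s-s_0)$ near $s_0$, i.e.\ a zero of order $m$ of $\zeta_\chi$ at $s_0$.

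Next I would bound the error term. Building $A$ greedily, including the next prime whenever $\pi_A$ drops below the target, controls $E(x)$ by the size of prime gaps near $x$. The Baker--Harman--Pintz bound $p_{n+1}-p_n\ll p_n^{21/40}$ gives $E(x)\ll x^{\beta_0-19/40}$, so the error term is analytic for $\Re w>\beta_0-19/40$, which contains $\{\Re w>1/2\}$ precisely when $\beta_0<39/40$; under RH the gap bound $\ll x^{1/2+\varepsilon}$ gives $E(x)\ll x^{\beta_0-1/2+\varepsilon}$ and the range $\beta_0<1$. This is the step that fixes the admissible strip, and I expect the principal obstacle to be exactly this passage from the smooth density $m\,\mathrm{Li}(x^{\beta_0})$ to an honest set of primes with analytically negligible error; everything downstream is comparatively soft.

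Finally I would assemble the global construction. For $\mathfrak O=\{s_j\}=\{\beta_j+i\gamma_j\}$ with prescribed (say simple) multiplicities $m_j$, take pairwise disjoint prime sets $A_j$ with $\pi_{A_j}\sim m_j\,\mathrm{Li}(x^{\beta_j})$ — possible since each has relative density $x^{\beta_j-1}\to 0$ among the primes — put $\chi(p)=-p^{i\gamma_j}$ on $A_j$, and on the remaining primes $L$ fix values in $\mathbb T$ for which $\sum_{p\in L}\chi(p)p^{-s}$ is analytic on $\{\Re s>1/2\}$; a generic (e.g.\ random) choice works by the mechanism behind Helson's theorem and removes any spurious singularity from the leftover. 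The hypothesis that $\mathfrak O$ has no accumulation point off $\Re s=1/2$ ensures that every compact subset of the open strip meets only finitely many $s_j$, which I would use to show that $\sum_j\sum_{p\in A_j}\chi(p)p^{-s}$ converges locally uniformly there away from the $s_j$, so that $\zeta_\chi$ extends analytically to $\{\Re s>1/2\}$ with zero set exactly $\mathfrak O$. Controlling this tail as the $s_j$ approach the line $\Re s=1/2$ is the one remaining delicate estimate.
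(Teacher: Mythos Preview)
The paper does not itself prove this statement; Theorem~1 is quoted from Seip \cite{S}, and the only description offered is that Seip attaches to each prescribed zero $\rho\in\mathfrak O$ an auxiliary pole $\rho'$ nearby (a ``dipole''), so that the resulting $\zeta_\chi$ is meromorphic with a set of poles that is not freely chosen. The mechanism, visible in the paper's proof of its own Theorem~2, is to work with the logarithmic derivative $\zeta_\chi'/\zeta_\chi=-\sum_p\chi(p)(\log p)\,p^{-s}+(\text{analytic for }\Re s>\tfrac12)$ and to realise its principal part as the Mellin transform of a bounded function $q$; a dipole $\frac{1}{s-\rho}-\frac{1}{s-\rho'}$ is $O(|s|^{-2})$ on vertical lines and hence admits such a representation, after which $\chi$ is chosen so that $\sum_p\chi(p)(\log p)\,p^{-s}$ matches $-\int q(x)x^{-s}\,dx$ up to an analytic error (Lemma~1 of the present paper, which is where Baker--Harman--Pintz enters).

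Your route is genuinely different: you work with $\log\zeta_\chi=P_\chi+R_\chi$ rather than its derivative, and build logarithmic singularities $m_j\log(s-s_j)$ directly by selecting thin prime sets $A_j$ with $\pi_{A_j}(x)\sim m_j\,\mathrm{Li}(x^{\beta_j})$ and setting $\chi(p)=-p^{i\gamma_j}$ on $A_j$. If completed this gives more than the theorem asks, namely an \emph{analytic} (pole-free) $\zeta_\chi$ on $\Re s>\tfrac12$ with zero set exactly $\mathfrak O$, whereas Seip's $\zeta_\chi$ carries the auxiliary poles. Your derivation of the threshold $39/40$ from $E(x)\ll x^{\beta_0-19/40}$ via the gap bound $p_{n+1}-p_n\ll p_n^{21/40}$ is the correct mechanism and matches how the constant arises in \cite{S}.

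There is, however, a real gap in the global step. The remark ``possible since each has relative density $x^{\beta_j-1}\to0$'' does not justify pairwise disjointness of infinitely many $A_j$: if, say, $\beta_j=\tfrac12+\tfrac1j$, then $\sum_j m_j\,\mathrm{Li}(x^{\beta_j})=\infty$ for every fixed $x$, so the target counting functions cannot all be accommodated inside the primes as written. This can be repaired (for instance by starting the greedy selection for $A_j$ only beyond some threshold $X_j\to\infty$, which shifts each main term by an entire function), but the fix must then be reconciled with the tail convergence you already flag and with uniformity in $j$ of the BHP-based error bound. In the dipole approach this particular difficulty does not appear in the same form, since all singularities are packed into a single $q$ and a single approximation step; the passage to infinitely many points accumulating at $\Re s=\tfrac12$ is handled in \cite{S} (and in the present paper's Theorem~2) by a dyadic decomposition of the strip.
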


This theorem is proved in \cite{S} by constructing the required function $ \chi $. Given the zero set $ \mathfrak O $, the construction in \cite{S} invloves forming "dipoles" made of a zero $ \rho \in \mathfrak O $ and an additional pole nearby. The resulting function  $ \zeta_\chi $ is meromorphic, and the set of its poles is not arbitrary. A natural further question mentioned in \cite{S} is if there are any restrictions on the set of poles of $ \zeta_\chi $ in Theorem 1. Our result  is the following theorem saying essentially that the set of poles can also be arbitrary.

To account for multiple zeroes and poles we are going to use the terminology of multisets. A multiset is a pair made of a subset $ S \subset \mathbb C $ and a function $ S \longrightarrow \mathbb N $. When we say that the set of zeroes of an analytic function $ f$ coincides with a multiset, $ M_S $, we mean that $ f ( s ) = 0 $ iff $ s \in S $, and the multiplicity of zero at $ s \in S $ is $ m ( s ) $. A similar terminology is used for poles.

Our goal is to establish the following

\begin{theorem}
 1. For any disjoint multisets $  Z$ and $ P $ in the strip $ 21/40 < \Re s<1$ having no accumulation points off the line $ \Re s = 21/40  $ there exists a completely multiplicative function $ \chi $ such that $ \zeta_\chi $ admits meromorphic continuation to the halfplane $ \Re s > 21/40 $ with $ Z $ the set of its zeroes and $ P $ the set of its poles. 
 
 2. If RH holds then for any disjoint multisets $  Z$ and $  P $ in the strip $ 1/2 < \Re s<1$ having no accumulation points off the line $ \Re s = 1/2 $ there exists a completely multiplicative function $ \chi $ such that $ \zeta_\chi $ admits meromorphic continuation to the halfplane $ \Re s > 1/2 $ with $ Z $ the set of its zeroes and $  P $ the set of its poles.
\end{theorem}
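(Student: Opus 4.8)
The plan is to reduce the problem to a statement about the prime Dirichlet series $D_\chi(s) = \sum_p \chi(p)p^{-s}$ and then to realize prescribed logarithmic singularities of $D_\chi$ by selecting sparse sets of primes whose counting functions track prescribed power-law main terms. Taking the logarithm of the Euler product gives $\log\zeta_\chi(s) = D_\chi(s) + R_\chi(s)$, where $R_\chi(s) = \sum_{k\ge 2}\frac1k\sum_p\chi(p)^k p^{-ks}$ converges absolutely and is holomorphic on $\Re s > 1/2$, hence on the whole region $\Re s > 21/40$ of interest. Consequently $\zeta_\chi$ has a zero of order $m$ (resp. a pole of order $n$) at a point $\rho$ precisely when $D_\chi(s) - m\log(s-\rho)$ (resp. $D_\chi(s) + n\log(s-\rho)$) is holomorphic near $\rho$; since the prescribed multiplicities are positive integers, the exponential of such a singularity is automatically single-valued, so it suffices to manufacture, for $D_\chi$, the logarithmic singularity $n_\rho\log\frac{1}{s-\rho}$ at each pole $\rho\in P$ and $-m_\rho\log\frac{1}{s-\rho}$ at each zero $\rho\in Z$, with everything else holomorphic on $\Re s > 21/40$.

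The building block is the following observation. If $\mathcal Q$ is a set of primes with counting function $\pi_{\mathcal Q}(x) = k\,\mathrm{li}(x^\beta) + O(x^{21/40+\varepsilon})$ for some integer $k\ge 1$ and some $\beta\in(21/40,1)$, then partial summation shows that $\sum_{p\in\mathcal Q}p^{-w}$ continues to $\Re w > 21/40$ as a function whose only singularity there is the branch point $k\log\frac{1}{w-\beta}$ at $w=\beta$, the error term contributing a function holomorphic on $\Re w > 21/40$. Setting $\chi(p) = p^{i\gamma}$ (resp. $\chi(p) = -p^{i\gamma}$) on $\mathcal Q$ then produces for $D_\chi$ a singularity $k\log\frac{1}{s-\rho}$ (resp. $-k\log\frac{1}{s-\rho}$) at $\rho = \beta + i\gamma$ and holomorphy elsewhere on $\Re s > 21/40$. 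The crucial number-theoretic input is the existence of such $\mathcal Q$: defining the ideal prime locations $x_j$ by $k\,\mathrm{li}(x_j^\beta) = j$ and choosing, by the Baker--Harman--Pintz theorem, a prime in each interval $[x_j, x_j + x_j^{21/40}]$ (placing several primes per such interval when the ideal spacing falls below $x^{21/40}$, which happens exactly for $\beta > 19/40$), one obtains a set whose counting function matches the main term with error $O(x^{21/40+\varepsilon})$. This is precisely where the exponent $21/40 = 0.525$ enters, and under the Riemann Hypothesis the admissible interval length improves to $x^{1/2+\varepsilon}$, yielding the exponent $1/2$ of part 2.

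To assemble the full construction I would choose pairwise disjoint prime sets $\mathcal Q_\rho$, one for each $\rho\in Z\cup P$, each realizing the required singularity as above, and set $\chi$ on their union accordingly; the disjointness guarantees that the singularities do not interfere and that the multiplicities add up correctly. On the (vast majority of) primes used by no $\mathcal Q_\rho$ I would assign balanced roots of unity block by block, partitioning the leftover primes into consecutive blocks of length $L$ and giving them the values $1, e(1/L),\dots,e((L-1)/L)$, so that $\sum_{p\le x,\ \mathrm{leftover}}\chi(p) = O(1)$ and hence the leftover contribution to $D_\chi$ is holomorphic on $\Re s > 0$. In particular no spurious pole at $s=1$ survives and no unwanted singularity is created, which reduces everything to the singular contributions of the $\mathcal Q_\rho$.

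The main obstacle is the passage to infinite multisets $Z\cup P$ accumulating on the line $\Re s = 21/40$. Although each $\mathcal Q_\rho$ contributes a single controlled singularity, the superposition $\sum_\rho \bigl(\pm k_\rho\log\frac1{s-\rho} + h_\rho(s)\bigr)$ of infinitely many such contributions need not converge term by term, and one must verify that the total selected set $\mathcal Q = \bigcup_\rho \mathcal Q_\rho$ has a globally controlled counting function, so that $D_\chi$ genuinely continues to a meromorphic function on $\Re s > 21/40$ whose poles accumulate only on the boundary line. Here the hypothesis that $Z$ and $P$ have no accumulation points off $\Re s = 21/40$ is exactly what is needed: it forces $\beta_\rho\to 21/40$, lets one organize the $\mathcal Q_\rho$ with rapidly shrinking influence away from the boundary, and, after incorporating holomorphic convergence factors that are at our disposal through the freedom in the leftover phases, secures local uniform convergence on compact subsets of $\{\Re s > 21/40\}\setminus(Z\cup P)$. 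Carrying out this bookkeeping uniformly, while keeping all the $\mathcal Q_\rho$ disjoint and the error terms uniform in $\rho$, is the technical heart of the argument and is where the method of \cite{S} must be refined so as to control zeros and poles independently.
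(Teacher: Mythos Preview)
Your route is genuinely different from the paper's, and it has a real gap at exactly the point you flag.

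You work with $\log\zeta_\chi=D_\chi+R_\chi$ and try to manufacture each logarithmic singularity by a dedicated sparse prime set $\mathcal Q_\rho$ with $\pi_{\mathcal Q_\rho}(x)=k_\rho\,\mathrm{li}(x^{\beta_\rho})+O(x^{21/40+\varepsilon})$, then superpose. The paper instead passes to the \emph{logarithmic derivative} $\zeta_\chi'/\zeta_\chi=-\sum_n\chi(n)\Lambda(n)n^{-s}$, whose singular part in $\Re s>1/2$ equals $-\sum_p\chi(p)(\log p)p^{-s}$; zeros and poles of $\zeta_\chi$ become simple poles of this function with integer residues. The construction then decouples into three steps: (a) an explicit Mittag--Leffler--type sum $g(z)=G_1(z)\sum_i m(p_i)\,g_i(z)/G_1(p_i)$, with building blocks $g_{z_0}(z)=\bigl[(z-z_0)(z-z_0+1)^{2n}\bigr]^{-1}$, yielding a meromorphic $g$ on $\Re s>21/40$ with the prescribed simple poles and residues and with $\sup_{\Re s>1}|s|^2|g(s)|<\infty$; (b) Paley--Wiener plus Riemann--Lebesgue to write $g(s)=\int_1^\infty q(x)x^{-s}\,dx$ with $q=o(1)$; (c) a \emph{single} use of Baker--Harman--Pintz to choose $\chi$ so that $\int_1^\infty q(x)x^{-s}\,dx+\sum_p\chi(p)(\log p)p^{-s}$ extends holomorphically to $\Re s>21/40$. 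Thus the infinite superposition is handled by pure complex analysis in step (a), before primes are touched; the primes enter once, to approximate one function, not one per singularity.

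Your gap is the superposition. For infinite $Z\cup P$ you must show that $\sum_\rho\bigl(\pm k_\rho\log\frac{1}{s-\rho}+h_\rho(s)\bigr)$ converges locally uniformly on $\{\Re s>21/40\}\setminus(Z\cup P)$. You propose to force this with ``holomorphic convergence factors \dots\ through the freedom in the leftover phases,'' but this is precisely where the plan is incomplete: the Mittag--Leffler corrections you need must here be realizable as Dirichlet series $\sum_{p\in\mathcal R_\rho}\chi(p)p^{-s}$ over pairwise disjoint prime sets with unimodular coefficients, while still leaving enough leftover primes for the balancing construction, and you give no argument that the required corrections lie in this very restricted class. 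Without this, nothing guarantees that $D_\chi$ (equivalently, the total counting function of $\bigcup_\rho\mathcal Q_\rho$) extends meromorphically past $\Re s=1$ with only the intended singularities. The paper's switch to the logarithmic derivative is what sidesteps this: simple poles replace branch points, the target function can be assembled abstractly with standard convergence factors, and the prime-allocation step is done once, uniformly, via a single $q(x)=o(1)$.
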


\medskip 
\textit{Acknowledgements.} We are indebted to K. Seip who suggested the question about poles to us. This work was supported by Russian Science Foundation under Grant 17-11-01064.

\bigskip 

\bfseries 
\begin{center}
Logarithmic derivative
\end{center}
\mdseries

The strategy of the proof is as follows. Let $ \zeta_\chi $ be the function we seek to find. Consider $g(s)=\frac{\zeta_\chi'(s)}{\zeta_\chi (s)}$. From the Euler product representation we have 
 \begin{align*} g(s)=-\sum_{ n=1 }^\infty  \chi(n)\Lambda(n)n^{-s} = -\sum_{p,a} \chi(p^a)\Lambda(p^a)p^{-as} = \\ -\sum_p \chi(p)\Lambda(p)p^{-s} - \sum_{p, a \ge 2} \chi(p^a )\Lambda(p^a)p^{-as}, \end{align*} where $p$ ranges over the primes, $ a $ over the naturals, and $ \Lambda $ is the von Mangoldt function. The second sum in the rhs is absolutely convergent for $ \Re s > 1/2 $, hence the poles and residues of $ g $ are exactly those of the function \[ \tig (s)= -\sum_{p}  \chi(p)p^{-s} \log p . \]

Thus the problem is reduced to the one of constructing the function $ \tig $ with the required poles and residues.

\bfseries 
\begin{center}
Mellin transform	
\end{center}
\mdseries

We seek the function $\tig $ in the form
\[ \tig (s) = h(s)+\int_1^{\infty}q(x)x^{-s} dx, \Re s>1 , \] where $h$ is analytic in the halfplane $ \Re s> 1/2 $, and $ q ( x ) = o ( 1) $, $ x \to +\infty $.

The following lemma extracts from \cite[8.1]{S} a part of the argument we are going to need. A proof is provided for completeness.

\begin{lemma}
Let $ q ( x ) = o ( 1) $, $ x \to +\infty $. 
Then there exists a completely multiplicative function $\chi $ such that the function
\[  \int_1^{\infty}q(x)x^{-s} dx + \sum_{p} \chi(p)p^{-s} \log p, \]
initially defined in the halfplane $ \Re s > 1 $, extends analytically to the halfplane $ \Re s > 21/40 $ unconditionally, and to the halfplane $ \Re s > 1/2 $ if the RH holds.
\end{lemma}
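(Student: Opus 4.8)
The plan is to choose the primes where $\chi(p)$ is "free" so that the prime sum $\sum_p \chi(p) p^{-s} \log p$ reproduces the integral $\int_1^\infty q(x) x^{-s}\, dx$ up to a function analytic in the target halfplane. The natural idea is to relate the two objects through a summation-over-primes versus integration-against-$dx$ comparison, and the standard device for this is to weight primes by the prime counting density. Concretely, I would write the Chebyshev-type sum $\psi(x) = \sum_{p^a \le x} \log p$ (or better, work directly with $\theta(x) = \sum_{p \le x}\log p$), whose asymptotics $\theta(x) \sim x$ are controlled by the zero-free region of $\zeta$. The governing identity $\sum_p f(p)\log p = \int_1^\infty f(x)\, d\theta(x)$ lets me convert the prime sum into a Stieltjes integral, and the difference $d\theta(x) - dx$ is, by the prime number theorem with error term, small in exactly the region $\Re s > 21/40$ unconditionally (via the Huxley/Korobov-type exponent giving the $21/40$ figure) and $\Re s > 1/2$ under RH.

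The key steps, in order, are as follows. First, I would fix the values $\chi(p)$ at primes so as to match the prescribed density; the most transparent approach is to let $\chi(p)$ be chosen so that $\chi(p)\log p$ approximates $-q(p) \cdot (\text{spacing at } p)$, i.e. to place the prescribed "mass" $q$ onto the primes. Since $\chi(p)$ must lie on $\mathbb{T}$, I cannot match magnitudes exactly and must instead distribute the required contribution across blocks of primes, using the freedom in phases to realize arbitrary real weights on average. Second, having fixed $\chi$, I would compare $\sum_p \chi(p) p^{-s}\log p$ with $\int_1^\infty q(x) x^{-s}\, dx$ by Abel summation / integration by parts, reducing both to integrals against a remainder term $R(x)$ which measures the discrepancy between the discrete prime data and the target continuous density. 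Third, I would estimate $R(x)$: the claim that the error function extends analytically past $\Re s = 21/40$ (resp. $1/2$) reduces to a bound of the form $R(x) = O(x^{\beta})$ with $\beta = 21/40$ (resp. $1/2$), which when inserted into $\int R(x) x^{-s-1}\, dx$ yields convergence, hence analyticity, for $\Re s > \beta$.

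The main obstacle, and where the exponents $21/40$ and $1/2$ actually enter, is the third step: controlling the cumulative discrepancy $R(x)$ between the prime-supported sum and the continuous integral. This is precisely where one must invoke the distribution of primes in short intervals. The unconditional exponent $21/40$ should come from the best known results on primes in short intervals of length $x^{21/40}$ (Huxley-type / the exponent governing $\theta(x+h)-\theta(x)\sim h$ for $h \gg x^{21/40}$), while under RH the interval length improves to $x^{1/2+\varepsilon}$, yielding the critical exponent $1/2$. I expect the delicate part to be the \emph{phase bookkeeping}: since the $\chi(p)$ are phases on $\mathbb{T}$ and the target $q$ is an arbitrary $o(1)$ function, I must show the blockwise phase assignment can simultaneously reproduce $q$ and keep the accumulated error within the short-interval bound, so that the error integral converges in the stated halfplane. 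Controlling this interplay — matching the prescribed mass while respecting both the unit-modulus constraint and the short-interval error — is the technical heart of the lemma.
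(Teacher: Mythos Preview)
Your plan matches the paper's proof: Abel summation rewrites the difference as $s\int_1^\infty R(x)x^{-s-1}\,dx$, and the paper then constructs $\chi$ block by block on intervals $[x_j,x_{j+1})$ with $x_{j+1}=x_j+x_j^{21/40}$, giving every selected prime in a block the \emph{same} phase (the argument of the accumulated error) and adding primes until $|R(x_{j+1})|\le\log x_{j+1}$. Two small corrections: the exponent $21/40$ is the Baker--Harman--Pintz short-interval result, not Huxley or Korobov--Vinogradov; and one must finish by extending $\chi$ from the chosen subset of primes to all primes without spoiling analyticity, which the paper does by quoting \cite[Lemma~5.3]{S}.
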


\begin{proof} Arguing as in \cite[8.1]{S} we are going to construct $ \chi $ for a subset, $ \mathcal P $, of primes using the identity 
\begin{align*}
 \int_1^{\infty}q(x)x^{-s}dx +\sum_{p \in \mathcal P} \chi(p)p^{-s}\log p = \\ s\int_1^{\infty}\left( \int_1^xq(y)dy-\sum_{p\le x, p \in \mathcal P } \chi(p) \log p \right)x^{-s-1}dx . \end{align*} 
It suffices to show that there exists a $ \chi $ such that 
\begin{equation}\label{2140-e}
   r(x):= \int_1^x q(y)dy-\sum_{p\le x, p \in \mathcal P } \chi(p)\log p = O \bigl(x^{\frac{21}{40}} \log x \bigr).
\end{equation}
Let 
\begin{equation}\label{xj} x_0=2, x_{j+1}=x_j+x_j^{\frac{21}{40}}. \end{equation} 
It is enough then to establish \eqref{2140-e} at the sequence $ x= x_j$. Indeed, if it is satisfied for $ x = x_j $ then for $x \in [ x_j , x_{ j+1} ) $ we have
$$ r(x) = r(x_j) + \int_{x_j}^x q(y)dy-\sum_{x_j<p\le x, p \in \mathcal P} \chi(p)\log p .$$ The first term in rhs is $O(x^{ 21/40 } \log x )$ by assumption, \[ \int_{x_j}^xq(y)dy = O(x^{\frac{21}{40}}) \] by the boundedness of $ q$, and the rightmost term in the right hand side is trivially $ O(x^{21/40}\log x) $.
 
 It remains to choose $ \chi $ so that $ r(x_j ) = O\bigl(x_j^{21/40}\log x_j \bigr) $. In fact we are going to choose it so that $ r ( x_j ) = O ( \log x_j ) $. This is done by induction in $ j $. Let a subset $ \mathcal P \cap [ 0 , x_j ) $ and the function $ \chi $ on it  be constructed so that $ r( x_k ) \le C \log x_k $ for some $ C $ for $ k = 1, 2, \ldots, j$. Clearly
 \begin{equation} \label{esqi-e} \int_{x_j}^{x_{j+1}} q(y)dy = o\bigl( x_j^{\frac{21}{40}}\bigr). \end{equation} 
 Let $ c_j $ be the argument of the number
 $ r( x_j ) + \int_{ x_j }^{ x_{j+1}} q $, and let $ \chi (p) = e^{ i c_j } $, $ p \in \mathcal P_j $, where $ \mathcal P_j $ is a subset of primes on the interval $ [ x_j, x_{ j+1} ) $ to be chosen as follows. By construction we have 
 \[ | r ( x_{j+1} ) | \le \left| \left|r( x_j ) + \int_{ x_j }^{ x_{j+1}} q \right| - \sum_{ p \in \mathcal P_j } \log p \right| .\]
Starting from an empty set, we shall add numbers to the $ \mathcal P_j $ until the rhs becomes $ \le \log x_{ j+1} $. This is possible since 
\[ \left|r( x_j ) + \int_{ x_j }^{ x_{j+1}} q \right| = o\bigl(x_j^{\frac{21}{40}}\bigr) \] by the induction assumption and \eqref{esqi-e}, while the interval $[x_j, x_{j+1}) $ contains at least $ C x_j^{21/40}/\log x_j $ primes (\cite{BH}, p. 562) for some constant $ C>0 $ independent of $ j $ . Thus $ r_{j+1} \le C \log x_{j+1} $, as required.
 
The construction above defines $ \chi $ on a subset $ \mathcal P = \cup_j \mathcal P_j $ of primes. Extending it to all primes is done verbatim as in \cite[Lemma 5.3]{S}. This proves the unconditional part of the assertion.

Assuming the Riemann hypothesis we follow the same argument with $ x_{i+1}=x_i + 4x_i^{1/2} \log x_i $ instead of \eqref{xj}, and take into account that  the number of primes in the interval $ [ x , x + c \sqrt{x} \log x ] $ is estimated below by $ \sqrt x $ for all $ c > 3 $ \cite{Dudek}. This gives  \[  r ( x ) = O( \sqrt{x} \log^2 x ) \] which implies the required assertion in the conditional case.
 \end{proof}
 
 The function $ q$ in \cite{S} is "reverse engineered"{} from its Mellin transform, $ R $, given explicitly, and has power decay, $ q ( x) = O ( x^{ -1/40} \log^2  x$). Our construction will be done in terms of $ q $ rather than $ R $. The price to pay is that we are not going to have any control over the decay of $ q$ in the power scale. It is for this reason our stip in the unconditional result is shifted by $ 1/40 $ to the right as compared to \cite{S}. 
 
 
Thus it remains to find an analytic function $g_1 $ in the halfplane 
$ \Re s > 1 $ of the form \[ g_1(s)=\int_1^{\infty}q(x)x^{-s}dx,\] with $ q $ vanishing as $ x\to +\infty $ which admits meromorphic extension to the halfplane $\Re s>21/40 $ with the prescribed poles and respective residues. 

\begin{lemma}
Let $g$ be an analytic function in the halfplane $ \Re z > 1 $ such that 
$\sup |z|^2 |g(z)| < \infty $. Then there exists a continuous function, $q$, $ q ( s ) = o ( 1 ) $, $ s\to +\infty $, such that \[ g(s)=\int_1^{\infty}q(s)x^{-s}dx, \; \Re s > 1 . \] 
\end{lemma}

\begin{proof}
Consider the function $h(t)=g(-it+1)$. The function $h$ belongs to the Hardy class $ H^2_+ $, hence the restriction $\left. h \right|_{\mathbb R } $ is the inverse Fourier transform of a certain function $ p \in L^2 ( \mathbb R ) $, vanishing on the negative real axis. Since $ \left.h \right|_{ \mathbb R } \in L^2 \cap L^1 $, the function $ p $ is the classical Fourier transform of $ h $, hence it is continuous and vanishes as $ x \to +\infty $
by the Riemann--Lebesgue lemma. The function $q(s):= p(\log s)$ then also vanishes as $ s \to +\infty $. Finally we have (recall that $s=-it+1$):  
\begin{align*}\int_1^{\infty}q(x)x^{-s} dx =\int_0^{\infty}q(e^y)e^{(1-s)y}dy =\int_0^{\infty}p(y)e^{ity}d y=h(t)=g(s) , \end{align*} as required.
\end{proof}

\medskip
\bfseries 
\begin{center}
Construction of $ g $
\end{center}
\mdseries

Let $\alpha = 21/40 $ in the unconditional case, and $ \alpha = 1/2 $ if RH is satisifed. Assume first that the sets $ Z $ and $ P $ have no accumulation points at finite distance. In view of lemma 2, Theorem 2 for this case will be proven if we manage to find a function $g$ analytic in the halfplane $ \Re s > 1 $ and satisfying $\sup_{ \Re z > 1 } |g(z)|\, |z|^2< \infty $, which admits meromorphic extension to the halfplane $ \Re z > \alpha $ with the given poles and residues in the strip $\alpha < \Re z <1 $.

Notice first that, given a point $z_0$, $\alpha< \Re z_0< 1 $, and a number $ C > 0 $, one can choose $ n = n ( C , z_0 )$ large enough so that the function \[ g_{z_0}(z) =\frac{1}{(z-z_0)(z-z_0+1)^{2n}} \] 
has the following properties, 

(i) $|g_{z_0}(z)| <C$ for $ \Re z >1 $.

(ii) $ g_{ z_0 } $ is analytic in $\{ \Re z > \alpha \} $ except at $ z_0 $,  has a simple pole at $z_0$ with $ \operatorname{Res}_{ z_0 } g_{ z_0 } = 1 $. 

(iii) $|g_{z_0}(z)|<C$ for $|z-z_0|>20 $, $ \Re z >\alpha$. 

\begin{lemma}
Let $ \Sigma $ be a subset of the strip $\alpha< \Re z < 1 $ having no accumulation points at finite distance, and $ m \colon \Sigma \longrightarrow \mathbb Z \setminus \{ 0 \} $ be an arbitrary function. Then there exists a meromorphic function $ g $ in the halfplane $\Re z >\alpha$, whose set of poles coincides with $ \Sigma $, all poles are simple, $ \operatorname{Res}_z g =  m ( z ) $ for $ z \in \Sigma$, and \[\sup_{ \Re z > 1 } |g(z)| |z|^2 < \infty .\]
\end{lemma}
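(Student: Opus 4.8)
The plan is to build $g$ as a weighted superposition of the elementary functions $g_{z_0}$, one for each prescribed pole, with the parameter $n=n(C,z_0)$ in each chosen so large that the whole series converges and the growth bound holds. Since $\Sigma$ has no accumulation point at finite distance, every disc $\{|z|\le R\}$ meets $\Sigma$ in a finite set, so $\Sigma$ is countable; I enumerate it as $z_1,z_2,\dots$. For each $k$ I put $g_k=g_{z_k}$ with $n=n_k$ to be fixed below, and set
\[ g(z)=\sum_{k\ge 1} m(z_k)\, g_k(z) . \]

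First I would pin down the two smallness estimates needed from the building block, sharpening (i) to a $|z|^2$-weighted form. Writing $\delta_0=1-\Re z_0>0$ and $\eta_0=2-\Re z_0>1$ (positive, respectively $>1$, because $\alpha<\Re z_0<1$), on $\Re z>1$ one has $|z-z_0|\ge\delta_0$, and $|z-z_0+1|\ge\eta_0$ always, while $|z-z_0+1|\ge|z|/2$ once $|z|\ge 2|z_0-1|+2$; splitting into this far region and the complementary bounded region gives
\[ \sup_{\Re z>1} |z|^2\,|g_{z_0}(z)| \le \frac{C(z_0)}{\eta_0^{\,2(n-1)}} \xrightarrow[n\to\infty]{} 0 . \]
Likewise, on $\{|z-z_0|>20,\ \Re z>\alpha\}$ one has $|z-z_0+1|\ge 19$, whence $|g_{z_0}(z)|\le 20^{-1}19^{-2n}\to 0$, a quantitative form of (iii). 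Hence I may choose $n_k$ so large that simultaneously
\[ |m(z_k)|\sup_{\Re z>1}|z|^2|g_k(z)|\le 2^{-k}, \qquad |m(z_k)|\sup_{\substack{|z-z_k|>20\\ \Re z>\alpha}}|g_k(z)|\le 2^{-k}. \]

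With these choices the growth bound is immediate: for $\Re z>1$ one has $|z|^2|g(z)|\le\sum_k 2^{-k}=1$. For meromorphy, fix a compact $K\subset\{\Re z>\alpha\}$; its $20$-neighbourhood is bounded, hence meets $\Sigma$ in finitely many points, so all but finitely many $z_k$ satisfy $|z-z_k|>20$ for every $z\in K$, and for those $k$ the bound $|m(z_k)g_k(z)|\le 2^{-k}$ holds on $K$. Thus on a neighbourhood of $K$ the function $g$ is a finite sum of functions meromorphic there (the ``near'' terms, each with its simple pole) plus a uniformly convergent series of functions analytic on $K$; the latter is analytic by the Weierstrass convergence theorem, so $g$ is meromorphic on $\{\Re z>\alpha\}$. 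Near a given $z_j$ every term except $m(z_j)g_j$ is analytic, so by (ii) $g$ has a simple pole at $z_j$ with $\operatorname{Res}_{z_j}g=m(z_j)$; as $m(z_j)\ne 0$ this is a genuine pole, and no other poles occur, so the pole set is exactly $\Sigma$ with the prescribed simple residues.

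The main obstacle is the weighted estimate in the second display: property (i) as stated gives only boundedness, whereas the target $\sup_{\Re z>1}|z|^2|g|<\infty$ requires genuine $|z|^{-2}$ decay of the sum. One must therefore verify that the $|z|^2$-weighted supremum of each block is not merely finite but tends to $0$ as $n\to\infty$, fast enough to be dominated by $2^{-k}$ uniformly in the location of $z_k$ in the strip. Everything else — countability of $\Sigma$, local finiteness of the ``near'' poles on a compact, and the residue computation — is then routine.
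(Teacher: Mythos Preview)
Your proof is correct. Both you and the paper build $g$ as a sum of the elementary blocks $g_{z_k}$ with the exponents $n_k$ chosen large enough to force convergence, but the mechanism for the $|z|^{-2}$ decay on $\{\Re z>1\}$ differs. The paper introduces an auxiliary zero-free analytic weight $G_1$ on $\{\Re z>\alpha\}$ with $G_1(z)=O(|z|^{-2})$ (for instance $G_1(z)=e^{-z}z^{-2}$), writes
\[ g(z)=G_1(z)\sum_k m(z_k)\,\frac{g_k(z)}{G_1(z_k)}, \]
and uses properties (i) and (iii) only to make the inner sum bounded by $1$; the quadratic decay then comes entirely from the outer factor $G_1$. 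You instead observe that each block $g_{z_0}$ already decays like $|z|^{-2n-1}$ on $\{\Re z>1\}$, so the $|z|^2$-weighted supremum can itself be driven below $2^{-k}/|m(z_k)|$ by taking $n_k$ large, which lets you dispense with the external weight altogether. Your route is slightly more direct and saves one device; the paper's route cleanly decouples the two requirements (convergence of the series versus quadratic decay at infinity) and uses (i)--(iii) exactly as stated rather than the sharpened weighted form you derive.
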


\begin{proof} Let $ G_1 $ be an arbitrary function analytic in the halfplane $\Re z>\alpha$, having no zeroes and satisfying $ G_1 ( z ) = O ( |z|^{-2} ) $ as $ |z| \to \infty $. $G_1(z)=e^{-z}z^{-2}$ will do.

Fix an arbitrary enumeration of $ \Sigma $. Given a $p_i \in \Sigma $, define $ g_i $ to be the function $ g_{ p_i } $ satisfying properties (i)--(iii) with \[ C=\frac{G_1(p_i)}{ |m (p_i)| 2^{i+1}}. \] Let
\begin{equation}\label{g} g(z)= G_1(z) \sum_i m ( p_i) \frac{g_i(z)}{G_1(p_i)}. \end{equation} Let us first check that the series in the rhs converge absolutely at any point $ z \notin \Sigma $ in the halfplane $ \Re z > \alpha $, and thus the function $ g $ is meromorphic with simple poles at $ \Sigma $ and no other singularities.

Indeed let $z \notin \Sigma$, $ \Re z > \alpha $. Clearly $|z-p_i|\le 20$ for at most finitely many $ i $'s. If $|z-p_k|>20$ for some $ k $, then \[ |g_k(z)| < \frac{G_1(p_k)}{|m ( p_k )| 2^{k+1}}, \] by (iii) from whence \[ \left| m ( p_k ) \frac{g_k(z)}{G_1(p_k)}\right|< 2^{-k-1} ,\] and the convergence is proven. The equality $ \operatorname{Res}_{ p_i } g = m ( p_i ) $ is obvious. It remains to notice that for $\Re z> 1 $ we have \[ \left| m ( p_i ) \frac{g_i(z)}{G_1(p_i)} \right|\le 2^{-i-1} ,\] hence $ | g ( z) | \le | G_1 ( z )| $, and thus $ \sup_{ \Re z \ge 1} |G_1(z)||z|^2 < \infty $, as required.
\end{proof}

Theorem 2 is thus proved in the partial case when the sets $ Z $ and $ P $ do not accumulate at finite distance. The general case is reduced to this one via dyadic decomposition of the strip in the same way as in \cite[Section 8.1]{S}.

\end{document}